\newtheorem{thm}{Theorem}
\newtheorem{prop}[thm]{Proposition}
\newtheorem{conj}[thm]{Conjecture}
\theoremstyle{mydefinition}
\theoremstyle{myremark}
\def\pa[1]{\frac{\partial}{\partial x}}
\newcommand{\qfac}[1]{(q)_n}
\title{Hankel determinants for convolution powers of Motzkin numbers}
\author{Ying Wang$^1$ and Yingrui Zhang$^{2,*}$}
\address{ $^{1}$School of Mathematics and Statistics, North China University of Water Resources and Electric Power,
 Zhengzhou 450045, PR China
 }
 \address{$^{2}$School of Statistics and Mathematics, Yunnan University of Finance and Economics, Kunming 650221, PR China
 }
\email{$^1$\texttt{wangying2019@ncwu.edu.cn}\ \ \ \& $^2$\texttt{zyrzuhe@126.com}}
\date{\today}
\begin{document}

\maketitle

\begin{abstract}
 We
evaluate the Hankel determinants of the convolution powers of Motzkin numbers for $r\leq 27$ by finding shifted periodic continued fractions, which
arose in application of Sulanke and Xin's continued fraction method. We also conjecture some polynomial characterization of these determinants.
\end{abstract}

\noindent
\begin{small}
 \emph{Mathematic subject classification}: Primary 15A15; Secondary 05A15, 11B83.
\end{small}

\noindent
\begin{small}
\emph{Keywords}: Hankel determinants; Continued fractions; Motzkin numbers.
\end{small}

\newcommand\con[1]{\equiv_{#1}}

\section{Introduction}
This paper is along the line of using generating functions to deal with Hankel determinants.
The Hankel determinant of a generating function $A(x)=\sum_{n\geq 0}a_nx^n$ is defined by
 $$ H_n(A(x)) = \det ( a_{i+j} )_{0\le i,j\le n-1}, \qquad H_0(A(x))=1.$$

In recent years, a considerable amount of work has been devoted to Hankel determinant evaluations of various counting coefficients.
Many of such Hankel determinants have attractive compact closed formulas,  such as
binomial coefficients, Catalan numbers \cite{J.M.E.Mays J.Wojciechowski}, Motzkin numbers \cite{M. Aigner,J.Cigler}, large and little Schr\"oder numbers \cite{R. A Brualdi and S. Kirkland.}.
See
 \cite{M. Aigner Catalan-like, M. Aigner Catalan, P. Barry,P. Barry Catalan, D.M. Bressoud, R. A Brualdi and S. Kirkland., M. Elouafi, I. Gessel and G. Viennot., Q.-H. Hou-A. Lascoux-Y.-P. Mu,C. Krattenthaler.1999, C. Krattenthaler.,C. Krattenthaler.2010,MuLili-WangYi-YeNan,MuLili-WangYi, Sulanke-Xin,Tamm} for further references.

The classical method of continued fractions, either by $J$-fractions \cite{C. Krattenthaler.,H. S. Wall},
or by $S$-fractions \cite{W. B. Jones and W. J. Thron}, requires $H_n(A(x))\neq 0$ for all $n$.
There also have Gessel-Xin's \cite{Gessel and Xin}, Han's\cite{GuoNiuHan} and Sulanke-Xin's \cite{Sulanke-Xin} continued fraction method allow $H_n(A(x))=0$ for some values of $n$. Sulanke-Xin defined a quadratic transformation $\tau$ such that $H(F(x))$ and $H(\tau(F(x)))$ have simple connections.
The shifted periodic continued fractions (of order $q$) of the form
 $$F_0^{(p)}(x) \mathop{\longrightarrow}\limits^\tau F_1^{(p)}(x)\mathop{\longrightarrow}\limits^\tau \cdots \mathop{\longrightarrow}\limits^\tau F_q^{(p)}(x)=F_0^{(p+1)}(x) $$
were found in \cite{Y. Wang and G. Xin}, which appear in Hankel determinants of many path counting numbers. Here $p$ is an additional parameter. If one can guess an explicit formula for $F_0^{(p)}(x)$ by Maple,
then their Hankel determinants can be easily computed. 
This is also the primary method of this paper.

We have know that the Hankel determinants for the convolution power of Catalan numbers exhibits some perfect results and Conjectures \cite{Y. Wang-G. Xin2}.
As we also know the close relationship between Motzkin numbers and Catalan numbers,
it is necessary to investigate the Hankel determinants for the convolution power of Motzkin numbers.
$$H_n(M(x)^r)= \det(M_n^{(r)})_{0\le i,j\le n-1}$$
is the $r$-th convolution power of the well-known Motzkin numbers $M_n=\displaystyle \sum_{k\geq 0}\binom{n}{2k}C_k$. $C_k$ is the Catalan numbers.
The name comes after the
generating function identity
\begin{align}
  \sum_{n\ge 0} M_n^{(r)} x^n = M(x)^r = \Big(\sum_{n\ge 0} M_n x^n \Big)^r.
\end{align}

In what follows, we will denote by $F(x,r)=M(x)^r$.
The cases $r=1$ are well-known \cite{Cigler-Krattenthaler-Motzkin}.
\begin{align*}
 H_n(F(x,1))&= H_n(M(x))=  1.
\end{align*}

The cases $r=2,3$ are nice.
\begin{thm}\label{example-F3}
\begin{align*}
  H_n(F(x,2))&=\left\{
\begin{aligned}
 1, &\text{\ \  n mod 12}=0,1,2,3.&\\
 0,& \text{\ \  n mod 12}=4,5,10,11.&\\
 -1,& \text{\ \  n mod 12}=6,7,8,9.&
\end{aligned}
\right.
\end{align*}

\begin{align*}
 H_{3n}(F(x,3))=H_{3n+1}(F(x,3))=(-1)^n(n+1)^2,\  H_{3n+2}(F(x,3))=0.
\end{align*}
\end{thm}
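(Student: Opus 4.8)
\emph{Proof strategy.} The plan is to feed $F(x,r)=M(x)^r$ into the continued fraction method of Sulanke and Xin \cite{Sulanke-Xin} and to read the Hankel determinants off an explicitly guessed shifted periodic continued fraction, following \cite{Y. Wang and G. Xin}. First I would record that the Motzkin generating function satisfies the quadratic relation $x^2M(x)^2-(1-x)M(x)+1=0$, so that $F(x,r)$ is an explicit algebraic function of degree two; this is what makes Sulanke and Xin's \emph{quadratic} transformation $\tau$ the natural tool, since every series arising below will again be a quadratic irrational with rational-function coefficients. The key input is that $\tau$ sends a series $F$ with $F(0)\neq0$ to a new series $\tau F$ and carries an explicit connection formula relating $H_n(F)$ to a Hankel determinant of $\tau F$ at a shifted index, up to an explicit multiplier; the decisive feature, absent from ordinary $J$- and $S$-fractions, is that $\tau$ stays well defined across the indices $n$ where $H_n(F)=0$.

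Next I would iterate $\tau$ starting from $F(x,2)$ and from $F(x,3)$, using Maple to guess closed forms for the iterates and to organise them into the shifted periodic pattern
$$F_0^{(p)}(x)\xrightarrow{\ \tau\ }F_1^{(p)}(x)\xrightarrow{\ \tau\ }\cdots\xrightarrow{\ \tau\ }F_q^{(p)}(x)=F_0^{(p+1)}(x).$$
For $r=2$ I expect the dependence on $p$ to be inessential, so that the cycle is periodic up to root-of-unity multipliers; unrolling the connection formula around it then forces the period-$12$ list $1,0,-1$, the two blocks of zeros $n\equiv4,5$ and $n\equiv10,11\pmod{12}$ being exactly the pairs of consecutive vanishing determinants that the quadratic step is built to jump over. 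For $r=3$ I expect a genuinely shifted cycle in which one full period advances the linear index by $3$ and multiplies the determinant by a $p$-dependent factor of the form $-\,(p+2)^2/(p+1)^2$; telescoping this factor from $p=0$ to $p=n-1$ then produces the sign $(-1)^n$ and the square $(n+1)^2$, while the single vanishing $H_{3n+2}(F(x,3))=0$ comes from the one step inside each period whose multiplier is zero.

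With the iterates guessed, the remaining task is verification rather than discovery. I would substitute each conjectured $F_k^{(p)}(x)$ into the explicit definition of $\tau$ and check the one-step identities $\tau F_k^{(p)}=F_{k+1}^{(p)}$ together with the closing identity $\tau F_{q-1}^{(p)}=F_0^{(p+1)}$; since all the series obey explicit quadratic equations, each identity clears of radicals to a polynomial identity in $x$ and $p$ that can be checked directly. I would then collect the per-step multipliers and index shifts into a recursion in $p$ and solve it to obtain the two displayed formulas.

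The main obstacle I anticipate is twofold. First comes guessing the correct closed form of $F_0^{(p)}(x)$ and of the intermediate iterates so that the cycle closes \emph{exactly} as $F_q^{(p)}=F_0^{(p+1)}$: the appearance of the perfect square $(n+1)^2$ for $r=3$ indicates that $p$ enters the continued fraction coefficients in a mildly delicate way, so the bookkeeping of multipliers around a period must be done with care. Second, one must confirm that the degenerate steps producing the zeros are traversed along the correct branch of the quadratic transformation, so that the connection formula is applied with the right index shift across each of them; this is precisely where the Sulanke--Xin framework, rather than a classical continued fraction, is indispensable.
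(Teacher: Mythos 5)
Your proposal follows essentially the same route as the paper: apply Sulanke--Xin's quadratic transformation $\tau$ to $F(x,r)$, guess the shifted periodic continued fraction $F_1^{(p)}$ by computer, and telescope the per-period multiplier (for $r=3$, a factor $-\bigl(\tfrac{p+1}{p}\bigr)^2$ per period of length $3$) together with the initial values $H_0=1$, $H_1=0$, $H_2=\bigl(\tfrac{n+2}{n+1}\bigr)^2$ to obtain the sign $(-1)^n$, the square $(n+1)^2$, and the vanishing at $3n+2$. The argument is correct and matches the paper's proof in structure and in all essential details.
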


When computing the $H_{n}\left(F(x,r)\right)$ for $r \geq 3$, they exhibit distinct patterns.
We need to discuss the cases where the $r \equiv 0 \pmod 3$, or not.

For $r \equiv 0 \pmod 3$ , we have the following Theorem and Conjectures:
\begin{thm}\label{example-F6}
\begin{align*}
H_{6n}\left(F(x,6)\right)= &H_{6n+1}\left(F(x,6)\right)= \left( -1 \right) ^{n} \left( n+1 \right) ^{5},\\
H_{6n+2}\left(F(x,6)\right)=&\left( -1 \right) ^{n}\,\frac1{10}\,\left( 4\,n+5
 \right)  \left( 2\,n+3 \right)  \left( 4\,n-3 \right)  \left( n+2
 \right)  \left( n+1 \right) ^{4} ,\\
 H_{6n+3}\left(F(x,6)\right)=&\left( -1 \right)
^{n}\frac1{45}\,
 \left( 144\,{n}^{2}+72\,n-155 \right)  \left( n+2 \right) ^{2}
 \left( 2\,n+3 \right) ^{2} \left( n+1 \right) ^{3} ,\\
H_{6n+4}\left(F(x,6)\right)=& \left( -1 \right) ^{n+1}\,\frac1{45}\,\left( n+2 \right) ^{3} \left( 2
\,n+3 \right) ^{2} \left( 144\,{n}^{2}+792\,n+925 \right)  \left( n+1
 \right) ^{2},\\
 H_{6n+5}\left(F(x,6)\right)=&\left( -1 \right) ^{n}\frac1{10}\, \left( 4\,n+15 \right)  \left( 4\,n+7 \right)
 \left( n+2 \right) ^{4} \left( 2\,n+3 \right)  \left( n+1 \right).
\end{align*}
\end{thm}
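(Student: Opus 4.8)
The plan is to run the same Sulanke--Xin continued fraction machinery that underlies the cases $r=1,2,3$, now applied to $F(x,6)=M(x)^6$, where $M(x)$ is the unique power-series solution of the Motzkin equation $x^2M(x)^2-(1-x)M(x)+1=0$. Since $6\equiv 0\pmod 3$, I expect the qualitative behaviour to parallel the $r=3$ case of Theorem~\ref{example-F3}: the determinants split according to the residue of $n$ modulo $6$, and within each residue class they equal a sign times a polynomial in $n$ (indeed $H_{6n}=H_{6n+1}=(-1)^n(n+1)^5$ continues the pattern $(-1)^n(n+1)^{r-1}$ seen at $r=3$, a useful sanity check). Concretely, I would first expand $F(x,6)$ as a continued fraction and then generate the sequence $F_0^{(0)}(x)\mathop{\longrightarrow}\limits^{\tau}F_1^{(0)}(x)\mathop{\longrightarrow}\limits^{\tau}\cdots$ by repeatedly applying the quadratic transformation $\tau$.

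The central step is to detect the \emph{shifted periodicity}. Using Maple I would compute enough iterates to identify the order $q$ and to recognize the closing relation $F_q^{(p)}(x)=F_0^{(p+1)}(x)$, i.e.\ after $q$ applications of $\tau$ the series returns to its initial shape with the auxiliary parameter $p$ advanced by one. I would then guess a closed form for $F_0^{(p)}(x)$ as an algebraic function of $x$ whose coefficients are rational functions of $p$, built from the Motzkin radical $\sqrt{1-2x-3x^2}$. Once guessed, the relations $\tau\big(F_j^{(p)}\big)=F_{j+1}^{(p)}$ and $F_q^{(p)}=F_0^{(p+1)}$ must be verified as genuine algebraic identities; because $\tau$ is quadratic, each verification reduces, after using the Motzkin equation to eliminate the radical, to a polynomial identity in $x$ and $p$ that can be checked directly.

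With the shifted periodic continued fraction in hand, the Hankel determinants follow from the simple connection between $H_n\big(F(x)\big)$ and $H_n\big(\tau(F(x))\big)$: each application of $\tau$ contributes an explicit multiplicative factor together with a fixed shift of the index. Accumulating these factors across one full period relates $H_{6(n+1)+k}$ to $H_{6n+k}$ for each fixed residue $k\in\{0,1,2,3,4,5\}$, and telescoping the resulting first-order recursions over $p=0,1,\dots,n-1$ (with the small initial cases supplied by direct computation) yields the six closed forms. The denominators $10$ and $45$ and the irreducible quadratic factors such as $144n^2+72n-155$ arise naturally as partial products of the continued-fraction data, and a final simplification matches the expressions in the statement.

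The main obstacle is the guessing and rigorous justification of the closed form for $F_0^{(p)}(x)$, together with the proof that the periodicity holds for \emph{all} $p$ rather than only for the finitely many instances checked by machine. Everything downstream is in principle a bookkeeping of multiplicative factors; but pinning down the exact algebraic shape of $F_0^{(p)}(x)$---and confirming that the guessed form satisfies $\tau^{\,q}\big(F_0^{(p)}\big)=F_0^{(p+1)}$ identically---is the delicate part, since the intermediate iterates $F_j^{(p)}$ tend to acquire increasingly complicated coefficients before the pattern stabilizes. A secondary care is the simplification that converts the raw continued-fraction products into the compact polynomial forms displayed above, making sure each of the six formulas is read off at the correct index.
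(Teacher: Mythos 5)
Your proposal follows essentially the same route as the paper: the authors apply Sulanke--Xin's quadratic transformation $\tau$ repeatedly, use computer experiments to guess a parametrized form $F_1^{(p)}$ defining a shifted periodic continued fraction, verify the closing relation as a polynomial identity in $x$ and $p$, and telescope the resulting first-order recursions with small initial values (this is carried out explicitly for $r=3$ and $r=4$ in Section~\ref{calculations3-4}, with $r=6$ handled by the same software package). The only cosmetic difference is that the paper encodes each iterate by its quadratic functional equation with polynomial coefficients in $x$ and $p$ rather than as an explicit algebraic function involving the Motzkin radical, which makes the verification a purely rational computation.
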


\begin{conj}
For $ r \equiv 0 \pmod 3$, we have
  \begin{align*}
   H_{rn}(F(x,r))&=H_{rn+1}(F(x,r))=(-1)^{n}(n+1)^{r-1}.\\
   H_{rn+2}(F(x,r))&+H_{rn-1}(F(x,r))= 3^2\alpha (n+1)^{r-1}.
\end{align*}
where $\lvert \alpha \rvert=\frac1{18}\,r \left( r-3 \right)$ .
\end{conj}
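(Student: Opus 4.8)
The plan is to push the continued-fraction machinery behind Theorems~\ref{example-F3} and~\ref{example-F6} to a general modulus. Concretely, I would apply Sulanke and Xin's quadratic transformation $\tau$ to $F(x,r)=M(x)^r$, where
$$M(x)=\frac{1-x-\sqrt{1-2x-3x^2}}{2x^2},$$
and chase the orbit $F_0^{(p)}\to F_1^{(p)}\to\cdots\to F_q^{(p)}=F_0^{(p+1)}$ with the shift parameter $p$. The first task is structural: to show that for every $r\equiv 0\pmod 3$ the orbit is genuinely shifted-periodic, and to determine the period $q$ together with the dependence of $F_0^{(p)}(x)$ on both $p$ and $r$. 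Since $M(x)$ has the classical $J$-fraction with constant coefficients, the effect of raising to the $r$-th power and then iterating $\tau$ should be trackable, if algebraically heavy.

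The analytic heart of the argument is a \emph{uniform} closed form for $F_0^{(p)}(x)$ as a rational function of $x$ whose coefficients are polynomial in $p$ and $r$. I would guess such a form from the Maple data that already produces the $r\le 27$ evaluations, and then verify it by checking that one application of $\tau$ carries $F_i^{(p)}$ to $F_{i+1}^{(p)}$; this collapses the whole problem to a single rational-function identity per period. Granting this, the Sulanke--Xin connection formulas turn each arrow of the orbit into an explicit multiplicative update of the Hankel determinant, so that $H_{rn+j}(F(x,r))$ becomes a product over one period times a boundary contribution. The factor $(-1)^n(n+1)^{r-1}$ for $j=0,1$ should then fall out, with the exponent $r-1$ counting the nontrivial diagonal factors accumulated per period (consistent with $(n+1)^2$ at $r=3$ and $(n+1)^5$ at $r=6$).

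For the second identity I would evaluate $H_{rn+2}$ and $H_{rn-1}$ from the same product and add them, the point being that the $p$-dependent parts cancel and leave a clean multiple of $(n+1)^{r-1}$. The mechanism is already visible at $r=6$, where Theorem~\ref{example-F6} gives $H_{6n+2}+H_{6n-1}=(-1)^{n+1}\,9\,(n+1)^5$, matching $3^2\alpha(n+1)^{r-1}$ with $|\alpha|=\tfrac1{18}\cdot 6\cdot 3=1$. I expect the constant $\tfrac1{18}r(r-3)$ to emerge as the residue of the leading continued-fraction coefficient at the step where the two determinants are compared, while the \emph{sign} of $\alpha$ --- deliberately left open in the statement --- is governed by the parity of the number of vanishing determinants inside the period and would require a separate subcase analysis in $n$ and $r$.

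The main obstacle is uniformity in $r$. For each fixed $r$ the computation is finite and Maple-verifiable, but a single formula for $F_0^{(p)}(x)$ valid for all $r\equiv 0\pmod 3$ must encode a continued fraction whose degree and whose period both grow with $r$, so the verifying identity has complexity unbounded in $r$. I expect the decisive step to be finding a normalization or change of variable for $F_0^{(p)}$ that renders the per-period update independent of $r$ up to the explicit polynomial factors above; without such a stabilization one can only confirm the conjecture $r$ by $r$, which is precisely why it is stated here as a conjecture.
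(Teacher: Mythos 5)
Your proposal follows essentially the same route as the paper: apply Sulanke--Xin's transformation $\tau$ to $F(x,r)$, guess the shifted periodic continued fraction $F_1^{(p)}$ from Maple data, and convert the per-period multiplicative updates into the determinant evaluations, exactly as the paper does explicitly for $r=3,4$ and, via its software package, for each $r\le 27$ (your numerical check $H_{6n+2}+H_{6n-1}=(-1)^{n+1}\,9\,(n+1)^5$ against Theorem~\ref{example-F6} is correct). You also identify the same obstruction the paper leaves open --- the lack of a closed form for $F_1^{(p)}$ uniform in $r$ --- which is precisely why this statement remains a conjecture rather than a theorem.
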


For $ r \equiv 1\  or\  2 \pmod 3$, the situation is more complex. We have developed software packages to compute $H_{n}(F(x,r))$ for $r \leq 27$ (this software package can also handle larger values of $r$, but there is not much necessity for it, as the method remains unchanged). 
Now we just present several results for $r = 4, 5, 7$, along with some Conjectures.

\begin{thm}\label{example-F4}
\begin{align*}
 H_{12n}(F(x,4))&=H_{12n+1}(F(x,4))=1,\\
 H_{12n+2}(F(x,4))&=2\, \left( 2\,n+1 \right)  \left( 8\,n-1 \right) ,\\
 H_{12n+3}(F(x,4))&=-2\, \left( 8\,n+9 \right)  \left( 2\,n+1
 \right) ,\\
 H_{12n+4}(F(x,4))&=H_{12n+5}(F(x,4))=-1,\\
 H_{12n+6}(F(x,4))&=4\, \left( n+1 \right)  \left( 8\,n+5 \right) ,\\
 H_{12n+7}(F(x,4))&=H_{12n+10}(F(x,4))=-64 \left( n+1 \right) ^{2},\\
 H_{12n+8}(F(x,4))&=H_{12n+9}(F(x,4))=0,\\
 H_{12n+11}(F(x,4))&=-4\,
 \left( n+1 \right)  \left( 8\,n+11 \right) .
\end{align*}
\end{thm}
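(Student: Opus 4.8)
The plan is to apply the Sulanke--Xin continued fraction method exactly as outlined in the introduction, specialized to $F(x,4)=M(x)^4$. Starting from a suitable normalization of $F(x,4)$, I would iterate the quadratic transformation $\tau$ to produce a doubly-indexed family of generating functions $F_j^{(p)}(x)$, and the central goal is to exhibit a shifted periodic continued fraction of period $q=12$,
\[
F_0^{(p)}(x)\mathop{\longrightarrow}\limits^{\tau}F_1^{(p)}(x)\mathop{\longrightarrow}\limits^{\tau}\cdots\mathop{\longrightarrow}\limits^{\tau}F_{12}^{(p)}(x)=F_0^{(p+1)}(x).
\]
The value $q=12$ is forced by the $12$-periodicity visible in the statement, and is consistent with the period already appearing for $F(x,2)$ in Theorem~\ref{example-F3}; each residue class $n\bmod 12$ will correspond to one position $F_j^{(p)}$ inside a period.

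Concretely, I would carry out the following steps. First, using Maple, compute the images $\tau^k\bigl(F(x,4)\bigr)$ for $k$ up to a few multiples of $12$, recording at each step the scalar connection coefficient that $\tau$ produces. Second, from this data guess closed rational-function expressions for the $F_j^{(p)}(x)$, and in particular for the ``period-head'' $F_0^{(p)}(x)$, with the dependence on the period counter $p$ being rational in $p$; this is the guessing step that the introduction advertises. Third, and crucially, verify the single identity $\tau^{12}\bigl(F_0^{(p)}(x)\bigr)=F_0^{(p+1)}(x)$ as an exact identity of generating functions, which collapses the infinite iteration into one finite algebraic check and thereby establishes periodicity for all $p$.

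Fourth, I would convert the continued fraction into determinant values. The Sulanke--Xin connection expresses $H_n\bigl(F(x,4)\bigr)$ as a monomial in the connection coefficients accumulated along the path from $F_0^{(0)}$ to the node reached after $n$ steps. Telescoping over complete periods produces the $(n+1)^2$-type growth (for instance the factor $-64(n+1)^2$ in the classes $n\equiv 7,10$) together with the sign and constant factors, while the position $j=n\bmod 12$ within the current period selects the remaining polynomial prefactor (such as $2(2n+1)(8n-1)$ or $4(n+1)(8n+5)$). Collecting the twelve positions reproduces the twelve displayed cases; the equalities $H_{12n}=H_{12n+1}$, $H_{12n+4}=H_{12n+5}$ and $H_{12n+7}=H_{12n+10}$ reflect coincidences among the accumulated connection coefficients at consecutive steps.

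The main obstacle will be the second and third steps: because $\tau$ is quadratic, the symbolic expressions grow rapidly, and finding the correct parametrization in $p$ for $F_0^{(p)}(x)$ is the delicate part of the argument. A secondary difficulty is the degenerate classes $n\equiv 8,9\pmod{12}$, where $H_n=0$: here the classical $J$-fraction breaks down, so one must genuinely rely on the Sulanke--Xin framework, which tolerates vanishing Hankel determinants, and confirm that the determinant is exactly zero rather than merely outside the scope of the ordinary continued fraction.
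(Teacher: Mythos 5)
Your proposal follows essentially the same route as the paper: iterate the Sulanke--Xin transformation $\tau$ on $F(x,4)$, guess a $p$-parametrized family $F_j^{(p)}$ by computer, verify the shifted periodicity as an exact identity, and read off the twelve residue classes from the accumulated connection coefficients plus the initial values $H_0,\dots,H_{12}$ of the period head. The only small correction is that the continued fraction in the paper has order $10$ rather than $12$ (some applications of $\tau$ drop the Hankel index by more than one, so ten $\tau$-steps account for the total index shift of $12$), a detail you would discover automatically in the computation.
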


\begin{thm}\label{example-F5}
\begin{align*}
 H_{15n}(F(x,5))=& H_{15n+1}(F(x,5))=1,\\
 H_{15n+2}(F(x,5))&=5\, \left( 2\,n+1 \right)  \left( 50\,{n}^{2}+15\,n-1 \right) ,\\
 H_{15n+3}(F(x,5))&=25\, \left( 25\,{n}^{2}+25\,n-2 \right)  \left( 2\,n+1 \right) ^{2},\\
 H_{15n+4}(F(x,5))&=-5\, \left( 50\,{n}^{2}+85\,n+34 \right)  \left( 2\,n+1 \right) ,\\
 H_{15n+5}(F(x,5))&=H_{15n+6}(F(x,5))=1,\\
 H_{15n+7}(F(x,5))&=5\, \left( n+1 \right)  \left( 100\,{n}^{2}+120\,n+33 \right) ,\\
 H_{15n+8}(F(x,5))&=25\, \left( n+1 \right) ^{2} \left( 100\,{n}^{2}+100\,n+17 \right) ,\\
 H_{15n+9}(F(x,5))&=H_{15n+12}(F(x,5))=1000\, \left( n+1 \right) ^{3},\\
 H_{15n+10}(F(x,5))&=H_{15n+11}(F(x,5))=0,\\
 H_{15n+13}(F(x,5))&=25\, \left( n+1 \right) ^{2} \left( 100\,{n}^{2}+300\,n+217 \right) ,\\
 H_{15n+14}(F(x,5))&=-5\, \left( n+1 \right)  \left( 100\,{n}^{2}+280\,n+193 \right) .
\end{align*}
\end{thm}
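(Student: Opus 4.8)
The plan is to prove Theorem~\ref{example-F5} by the same machinery that yields Theorems~\ref{example-F3} and \ref{example-F4}: Sulanke and Xin's quadratic transformation $\tau$ together with the shifted-periodic continued fraction scheme of \cite{Y. Wang and G. Xin}. Write $M=M(x)$ for the Motzkin generating function, so that $x^2M^2-(1-x)M+1=0$, equivalently $x^2M^2=(1-x)M-1$, and $F(x,5)=M^5$. Since $M(0)=M_0=1$, the series $F(x,5)$ already has constant term $1$, so we set $F_0^{(0)}(x)=F(x,5)$. The whole computation stays inside the quadratic function field $\mathbb{Q}(x)[M]=\mathbb{Q}(x)[\sqrt{1-2x-3x^2}]$: every series obtained by iterating $\tau$ is of the form $u(x)+v(x)M$ with $u,v$ rational, so the iteration never leaves a two-dimensional space over $\mathbb{Q}(x)$ and can be performed exactly.

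First I would iterate $\tau$ on $F_0^{(0)}$, recording at each step the scalar factor that $\tau$ contributes to the Hankel determinant (this bookkeeping is built into the transformation of \cite{Sulanke-Xin}). Running this in Maple, I expect to detect that the orbit is shifted-periodic of period $q=15=3r$,
\[
F_0^{(p)}(x)\mathop{\longrightarrow}\limits^{\tau}F_1^{(p)}(x)\mathop{\longrightarrow}\limits^{\tau}\cdots\mathop{\longrightarrow}\limits^{\tau}F_{15}^{(p)}(x)=F_0^{(p+1)}(x),
\]
mirroring the period $12=3\cdot4$ seen for $r=4$. The crux is then to guess, from the first several values of $p$, closed forms for the head $F_0^{(p)}(x)$ and for the intermediate iterates $F_i^{(p)}(x)=u_i^{(p)}(x)+v_i^{(p)}(x)M$, where the coefficients of the rational functions $u_i^{(p)},v_i^{(p)}$ are themselves rational in $p$, together with closed forms for the fifteen recorded factors $\lambda_1^{(p)},\dots,\lambda_{15}^{(p)}$.

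With these candidates in hand the verification is routine: each asserted identity $\tau\big(F_i^{(p)}\big)=F_{i+1}^{(p)}$, and the wrap-around $F_{15}^{(p)}=F_0^{(p+1)}$, is an identity in $\mathbb{Q}(p)(x)[M]$ that clears to a polynomial identity once every $M^2$ is reduced by $x^2M^2=(1-x)M-1$, and so can be checked directly. The determinant is then recovered by accumulating the recorded factors with their multiplicities, exactly as an ordinary $J$-fraction gives $H_n=\lambda_1^{\,n-1}\lambda_2^{\,n-2}\cdots\lambda_{n-1}$; writing $N=15n+j$, the period-$15$ pattern of the $\lambda_i^{(p)}$ makes consecutive blocks telescope, leaving a finite expression in $n$ that, after substitution and simplification, should collapse to the fifteen cases in the statement. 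The residues $j=10,11$ come out as $H_N=0$ because one recorded factor vanishes there, and it is exactly this occurrence of vanishing determinants that forces us to use $\tau$ rather than a plain $J$-fraction. I expect the main obstacle to be the guessing step itself: pinning down the correct parametrized family $F_i^{(p)}(x)$ and a clean $p$-dependence for the factors $\lambda_i^{(p)}$, particularly across the indices $j=9,\dots,13$ where the determinant passes through a nonzero value, then $0$, then back to nonzero, so that the continued fraction degenerates and the robustness of $\tau$ is essential. Everything past a correct guess is mechanical.
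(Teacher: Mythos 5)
Your proposal matches the paper's method exactly: the paper proves only the $r=3$ and $r=4$ cases in detail (via Sulanke--Xin's $\tau$, a guessed parametrized family $F_i^{(p)}$, and accumulation of the per-step determinant factors) and asserts that $r=5$ and beyond follow from the same software package, which is precisely the guess-and-verify scheme you describe. The only caveat is a bookkeeping one: the determinant-index period is $3r=15$, but the number of $\tau$-applications per cycle can be smaller (for $r=4$ the paper uses $10$ transformations shifting the index by a total of $12$, since steps with $d>0$ shift by $d+1$), so your identification of the orbit length with $15$ should not be taken literally.
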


\begin{thm}\label{example-F7}
\begin{align*}
H_{21n}(F(x,7))=&H_{21n+1}(F(x,7))=\left( -1 \right) ^{n},\\
H_{21n+2}(F(x,7))=&{\left( -1 \right) ^{n+1}\, \frac {7}{30}}\, \left( 2\,n+1 \right)  \left( 374556\,{n}^{4}+385532\,{n}^{3}+127449
\,{n}^{2}+13363\,n+60 \right) ,\\
H_{21n+3}(F(x,7))=&{\left( -1
 \right) ^{n}\,\frac {49}{180}}\, ( 29647548\,{n
}^{6}+54252996\,{n}^{5}+27993259\,{n}^{4}+2344748\,{n}^{3}  \\
&-1393217\,{n
}^{2}-227934\,n-1080 )  \left( 2\,n+1 \right) ^{2},\\
 H_{21n+4}(F(x,7))=&{ \left( -1 \right) ^{n+1}\, \frac {343}{180}}\, \left( 2\,
n+1 \right) ^{3} ( 4235364\,{n}^{6}+12706092\,{n}^{5}+12161065\,{
n}^{4}\\
&+3145310\,{n}^{3}-906059\,{n}^{2}-361032\,n+3420 ) ,\\
H_{21n+5}(F(x,7))=&{ \left( -1 \right) ^{n+1}\,\frac {49}{180}}\, ( 29647548\,{n}^{6}+
123632292\,{n}^{5}+201441499\,{n}^{4}+160049288\,{n}^{3}\\
&+61715353\,{n}
^{2}+9000600\,n-123300 )  \left( 2\,n+1 \right) ^{2},\\
H_{21n+6}(F(x,7))=&{ \left( -1 \right) ^{n+1}\,\frac {7}{30}}\,\left( 374556\,{n}^{4}+1112692\,{n}^{3
}+1218189\,{n}^{2}+583163\,n+103170 \right)  \left( 2\,n+1 \right) ,\\
H_{21n+7}(F(x,7))=&H_{21n+8}(F(x,7))=\left( -1 \right) ^{n+1},\\
 H_{21n+9}(F(x,7))=&{\left( -1 \right) ^{n}\,\frac {7}{30}}\,
 \left( n+1 \right)  \left( 749112\,{n}^{4}+1850828\,{n}^{3}+1698242\,
{n}^{2}+686273\,n+103230 \right),\\
H_{21n+10}(F(x,7))=&{\left( -1 \right) ^{n+1}\,\frac {49}{
180}}\, \left( n+1 \right) ^{2} (
118590192\,{n}^{6}+375938976\,{n}^{5}+459474568\,{n}^{4}\\
&+261223312\,{n
}^{3}+59779951\,{n}^{2}-1682814\,n-1959840 ) ,\\
H_{21n+11}(F(x,7))=&{ \left( -1 \right) ^{n}\,\frac {343}{90}}
\, \left( n+1 \right) ^{3} ( 16941456\,{n}^{6}+84707280\,{n}^{5}+
148881208\,{n}^{4}+115228792\,{n}^{3}\\
&+34672057\,{n}^{2}-1047963\,n-
1697400 ),\\
H_{21n+12}(F(x,7))=&{ \left( -1 \right) ^{n}\,\frac {2401}{15}}\, \left( n+
1 \right) ^{4} \left( 115248\,{n}^{4}+460992\,{n}^{3}+587608\,{n}^{2}+
288512\,n+43695 \right),\\
H_{21n+13}(F(x,7))=&H_{21n+16}(F(x,7))=\left( -1
 \right) ^{n+1}\,537824\,\left( n+1 \right) ^{5},\\
 H_{21n+14}(F(x,7))=&H_{21n+15}(F(x,7))=0,\\
 H_{21n+17}(F(x,7))=&{ \left( -1 \right) ^{n}\,\frac {2401}{15}}\, \left( n+
1 \right) ^{4} \left( 115248\,{n}^{4}+460992\,{n}^{3}+587608\,{n}^{2}+
217952\,n-26865 \right),\\
H_{21n+18}(F(x,7))=&{ \left( -1 \right) ^{n+1}\,\frac {343}{90}}\,\left( n+1 \right) ^{3} ( 16941456\,{n}
^{6}+118590192\,{n}^{5}+318295768\,{n}^{4}\\
&+398162632\,{n}^{3}+
205815337\,{n}^{2}-2633505\,n-27024030 ) ,\\
H_{21n+19}(F(x,7))=&{ \left( -1 \right) ^{n+1}\,\frac {49}{180}}\,
\left( n+1 \right) ^{2} ( 118590192\,{n
}^{6}+1047143328\,{n}^{5}+3815496328\,{n}^{4}\\
&+7351444912\,{n}^{3}+
7906357711\,{n}^{2}+4503507834\,n+1062057240 ) ,\\
H_{21n+20}(F(x,7))=&{\left( -1 \right) ^{n+1} \,\frac {7}{30}}
\, \left( n+1 \right)  \left( 749112\,{n}^{4}
+4142068\,{n}^{3}+8571962\,{n}^{2}+7868343\,n+2702820 \right).
\end{align*}
\end{thm}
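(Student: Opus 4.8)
The plan is to treat $r=7$ as a single instance of the continued fraction machinery of Sulanke and Xin, as adapted in \cite{Y. Wang and G. Xin}. First I would fix the Motzkin generating function $M(x)=\sum_{n\ge 0}M_nx^n$, which satisfies $M(x)=1+xM(x)+x^2M(x)^2$, and take $F(x,7)=M(x)^7$ as the seed series. The aim is to exhibit a \emph{shifted periodic continued fraction} for $F(x,7)$, that is, a chain
$$F_0^{(p)}(x) \mathop{\longrightarrow}\limits^\tau F_1^{(p)}(x)\mathop{\longrightarrow}\limits^\tau \cdots \mathop{\longrightarrow}\limits^\tau F_{21}^{(p)}(x)=F_0^{(p+1)}(x),$$
whose period $21=3\cdot 7$ matches the observed length-$21$ residue pattern of $H_n(F(x,7))$. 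The first concrete step is to run the quadratic transformation $\tau$ on $F(x,7)$ symbolically in Maple, recording at each stage both the transformed series $F_i^{(p)}(x)$ and the scalar factor by which $\tau$ changes the Hankel determinant; these scalars are precisely the ingredients that will assemble into the polynomial prefactors appearing in the statement.

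The crucial step is to guess an explicit closed form for $F_0^{(p)}(x)$ as a function of the shift parameter $p$, expected to be rational in $x$ with coefficients that are rational functions of $p$, interpolated from finitely many computed instances. Once such a candidate is in hand, verifying the single periodicity identity $\tau^{21}\bigl(F_0^{(p)}\bigr)=F_0^{(p+1)}$ becomes a finite algebraic check: one composes $\tau$ with itself $21$ times on the guessed form and confirms the result equals the guessed form with $p$ advanced by one. This reduces an infinite family of determinant evaluations to a single closed identity in the two variables $x$ and $p$.

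Having established periodicity, the Hankel determinants follow by bookkeeping. Writing $n=21m+k$ with $0\le k<21$, the value $H_n(F(x,7))$ is the product of the accumulated Hankel-ratio factors collected along $m$ full periods together with the first $k$ steps of the partial period; because the per-period factor is itself a polynomial in the period index, telescoping over $m$ produces the $(-1)^m$ signs, the powers $(n+1)^j$, and the explicit degree-four and degree-six cofactors displayed for each residue class. Each of the twenty-one formulas is then obtained by specializing this product to the corresponding $k$ and simplifying.

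The main obstacle is the guessing-and-proving of $F_0^{(p)}(x)$ itself. The transformation $\tau$ is quadratic, so iterating it introduces nested occurrences of the Motzkin radical $\sqrt{1-2x-3x^2}$ that must collapse to a clean form only after a full period; detecting the correct ansatz for how the coefficients depend on $p$, from data already unwieldy at $r=7$, is where the real difficulty lies. I would mitigate this by exploiting the parallel structure visible in the $r=4,5$ cases of Theorems \ref{example-F4} and \ref{example-F5}, whose shorter periods and lower-degree cofactors reveal the shape of the ansatz, and then scaling the same template up to period $21$. Once the ansatz is correct, the remaining verification is mechanical, if lengthy.
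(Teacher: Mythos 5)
Your proposal takes essentially the same route as the paper: iterate Sulanke--Xin's quadratic transformation $\tau$ to produce a shifted periodic continued fraction, guess the $p$-parametrized form $F_1^{(p)}$ from computer data, verify the one-period recursion as a rational-function identity, and telescope against the initial values (the paper carries this out explicitly for $r=3,4$ and via its software package for $r=7$). Two small corrections: the index shift per period is $21$ but the number of $\tau$-steps per period is generally smaller (for $r=4$ the shift is $12$ while the chain has $10$ distinct stages, since some steps with $d\geq 1$ drop the Hankel index by more than one), and no Motzkin radical ever appears because $\tau$ acts entirely on quadratic functional equations with rational coefficients.
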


\begin{conj}
For $ r \equiv 1 \ or \ 2 \pmod 3$, we have
  \begin{align*}
   H_{3rn}(F(x,r)) &= H_{3rn+1}(F(x,r)) = H_{3rn+r}(F(x,r)) = H_{3rn+r+1}(F(x,r))=\alpha.\\
   H_{3rn+2r}(F(x,r))&=H_{3rn+2r+1}(F(x,r))= 0.\\
    H_{3rn+2r-1}(F(x,r))&+H_{3rn+2r+2}(F(x,r))=\beta \left((2r)(n+1)\right)^{r-2}.\\
     H_{3rn+2}(F(x,r))&+H_{3rn-1}(F(x,r))=\frac12\gamma r(r-3).
\end{align*}
where $\lvert \alpha \rvert=\lvert \beta \rvert=\lvert \gamma \rvert =1$.
\end{conj}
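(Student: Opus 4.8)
The plan is to push the Sulanke--Xin continued fraction method, exactly as in the proofs of Theorems~\ref{example-F3}, \ref{example-F4}, \ref{example-F5} and \ref{example-F7}, to a family that is uniform in $r$. Write $M=M(x)$ for the Motzkin series, which satisfies the defining quadratic $x^2M^2-(1-x)M+1=0$, and set $F(x,r)=M^r$. Since $F(0,r)=1$, the transformation $\tau$ applies, and one generates the orbit $F_0^{(0)}=F(x,r)$, $F_1^{(0)}=\tau(F_0^{(0)})$, and so on. At each step Sulanke--Xin's connection records how the Hankel determinant is multiplied by an explicit scalar factor depending only on the leading data of the current series, so the whole computation of $H_n(F(x,r))$ reduces to two things: identifying the period $q=q(r)$ of the shifted periodic continued fraction, and finding a closed form for the seed $F_0^{(p)}(x)$ as a rational function of $x$ whose coefficients are polynomial in the shift parameter $p$ (and in $r$).

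The core steps, in order, are as follows. First I would verify the shifted periodicity by computing the $\tau$-orbit symbolically for several values of $p$ and guessing an explicit $F_0^{(p)}(x)=P_p(x)/Q_p(x)$, in parallel with the Catalan analogue of \cite{Y. Wang-G. Xin2}. Second, I would promote the guess to a theorem by substituting it into the algebraic relation defining $\tau$ and checking that $F_q^{(p)}=F_0^{(p+1)}$; because $M$ is algebraic, clearing the square root via the quadratic above turns this into a finite polynomial identity in $x$ and $p$. Third, with the seed and the per-step scalars in hand, the $J$-fraction data $(b_j,\lambda_j)$ of $F(x,r)$ become eventually periodic with a period scaling linearly in $r$, inducing the observed $3r$-periodicity in the Hankel index, and the determinants follow from the standard product $H_n=\prod_{k=1}^{n-1}\lambda_k^{\,n-k}$ away from the indices where this product degenerates---the degeneracies being precisely what $\tau$ is built to resolve. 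The four assertions then correspond to the structurally stable features of one period: the residues $0,1,r,r+1$ give units ($\lvert\alpha\rvert=1$) because the intervening $\lambda_k$ telescope to $\pm1$, the residues $2r,2r+1$ give $0$ at exactly the degenerate steps, and the paired sums defining $\beta$ and $\gamma$ are the residual factors surviving the telescoping. A complete proof must also pin down the normalizing data (the constant $\tfrac12$, the base $2r$, and the exponent $r-2$) that the computed cases $r=4,5,7$ strongly suggest but do not by themselves force.

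The hard part will be the uniformity in $r$. For each fixed $r$ the procedure is algorithmic and terminates---this is how the explicit Theorems are obtained---but both $q(r)$ and the degree in $n$ of the intermediate determinants grow with $r$, so a single closed form for $F_0^{(p)}(x)$ covering all $r\equiv 1,2\pmod 3$ is not evident from the data. My strategy to circumvent this is to avoid computing every $H_n$ and instead prove directly that the four conjectured combinations are invariants of one full period: the four unit values follow once the product of the intervening $\lambda_k$ is shown to be $\pm1$, while the $\gamma$-identity rests on a cancellation of leading terms (the top-degree parts of $H_{3rn+2}$ and $H_{3rn-1}$ annihilate, leaving $\tfrac12 r(r-3)$) that I would deduce from a symmetry of the periodic block rather than from the individual polynomials. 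Establishing that symmetry---equivalently, controlling the two extreme $\lambda$-coefficients of each period as explicit functions of $r$---is the crux, and is where I expect the argument to require the most work.
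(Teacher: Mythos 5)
The statement you are addressing is a \emph{conjecture}, and the paper does not prove it in general either: its ``main result'' is only that the conjectures hold for $r\leq 27$, established by running the Sulanke--Xin $\tau$-transformation case by case (as illustrated for $r=3$ and $r=4$ in Section~\ref{calculations3-4}) with a software package that, for each fixed $r$, guesses the shifted-periodic seed $F_1^{(p)}$, verifies it, and reads off the determinants from the resulting recursion and initial values. Your proposal follows the same method for fixed $r$, but then aims higher --- at a proof uniform in $r$ --- and this is exactly where it fails to close. You yourself flag the crux: a single closed form for the seed $F_0^{(p)}(x)$ covering all $r\equiv 1,2\pmod 3$ ``is not evident from the data,'' and the ``symmetry of the periodic block'' that would force the telescoping of the $\lambda_k$ to $\pm1$, the vanishing at residues $2r,2r+1$, and the two paired-sum identities is asserted as a target rather than established. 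Without that symmetry, none of the four displayed identities is proved for a single new value of $r$; what you have is a research program, not a proof.

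Two further technical points would need repair even within the program. First, you invoke the $J$-fraction product $H_n=\prod_k\lambda_k^{\,n-k}$, but that machinery presupposes $H_n\neq 0$ for all $n$, which is false here (the conjecture itself asserts $H_{3rn+2r}=H_{3rn+2r+1}=0$); the entire point of the Sulanke--Xin transformation is to bypass the $J$-fraction, so the argument cannot lean on both simultaneously without carefully delimiting where each applies. Second, the normalizing data ($\tfrac12$, the base $2r$, the exponent $r-2$, and the signs $\alpha,\beta,\gamma$) are, as you concede, only ``strongly suggested'' by $r=4,5,7$; a proof must derive them from the per-step scalars of one period, which in the paper's computations (see the $r=4$ recursion) are complicated rational functions of $p$ with no visible uniform-in-$r$ pattern. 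If your goal is only to match what the paper actually establishes --- verification for $r\leq 27$ --- then your first two steps (guess the seed, verify it by a polynomial identity, iterate) are precisely the paper's procedure and would suffice; but as a proof of the conjecture as stated, for all $r\equiv 1,2\pmod 3$, the proposal has a genuine and acknowledged gap at its central step.
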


\begin{conj}
For $ r \equiv 0 \pmod 3$, the shifted periodic of $H_n(F(x,r))$ is $r$.
For $ r \equiv 1\  or\  2 \pmod 3$, the shifted periodic of $H_n(F(x,r))$ is $3r$.
\end{conj}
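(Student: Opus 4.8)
The plan is to deduce the statement directly from the Sulanke--Xin continued fraction iteration applied to $F(x,r)=M(x)^r$, carrying $r$ along as a formal parameter rather than checking each residue class separately. First I would record the defining algebraic relation $x^2M(x)^2+(x-1)M(x)+1=0$ and use it to present $F(x,r)$ in a form on which the quadratic transformation $\tau$ can be applied. Recall from the introduction that $\tau$ sends a series $F$ to $\tau(F)$ so that the Hankel determinants of the two are linked by an explicit product of weights, and that the shifted periodicity phenomenon asserts that the orbit $F_0^{(p)}\to F_1^{(p)}\to\cdots$ returns after $q$ steps to a series of the same shape with the parameter $p$ advanced to $p+1$. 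The number the conjecture calls the shifted period is exactly this return time $q$, and it also equals the modulus of the residue classification seen in Theorems~\ref{example-F6}, \ref{example-F4}, \ref{example-F5} and \ref{example-F7}; the goal is to prove $q=r$ when $3\mid r$ and $q=3r$ otherwise.

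The key steps, in order, are: (i) compute the first several iterates $F_k^{(p)}(x)$ symbolically in $r$, recognizing them as algebraic functions whose defining coefficients are polynomials in $p$ and $r$; (ii) isolate the discrete shift data advanced by $\tau$ and rewrite the closing condition $F_q^{(p)}=F_0^{(p+1)}$ as a finite family of polynomial identities in $p$ and $r$; (iii) determine the least $q$ for which these identities hold identically; and (iv) verify minimality, namely that no proper divisor of $q$ already closes the orbit. I expect the mod-$3$ dichotomy to originate in the factorization $1-2x-3x^2=(1+x)(1-3x)$ inside $M(x)$: when $3\nmid r$ one must traverse the orbit three times to realign with this three-fold structure, forcing $q=3r$, whereas when $r=3s$ the series $F(x,r)=\bigl(M(x)^3\bigr)^s$ inherits the special behavior already recorded for $F(x,3)$ in Theorem~\ref{example-F3}, which collapses the period to $q=r$.

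The main obstacle will be making steps (i)--(iii) uniform in $r$. For any single $r\le 27$ the iterates can be guessed and confirmed by Maple, which is how the displayed formulas for $r=2,\dots,7$ were obtained, but a genuine proof requires a closed form for $F_0^{(p)}(x)$ valid for all $r$ together with a proof that this form is stable under $\tau$ with the stated return time. The natural route is an induction along the orbit that propagates an ansatz for $F_k^{(p)}(x)$ keyed to $k\bmod 3$; showing that the ansatz is preserved and that the accumulated weights reproduce the period $r$ or $3r$ is where the real difficulty lies, since it amounts to establishing a family of polynomial recurrences in $p$ and $r$ rather than verifying finitely many numerical instances. By contrast, the minimality in step (iv) should be comparatively routine once the closed form is available, as it reduces to exhibiting a single coefficient whose value genuinely changes across one full cycle.
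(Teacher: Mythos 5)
There is a genuine gap here, and it is worth being precise about what it is. The statement you are addressing is a \emph{conjecture}: the paper itself does not prove it for general $r$, and its stated main theorem only asserts that the conjectures hold for $r\leq 27$. That verification is carried out case by case: for each fixed $r$ one runs the Sulanke--Xin transformation $\tau$, uses Maple to \emph{guess} a closed form for the iterate $F_1^{(p)}(x)$ as a function of the shift parameter $p$ (as displayed explicitly for $r=3$ and $r=4$ in Section~\ref{calculations3-4}), verifies that the guess is stable under one full cycle of $\tau$ with $p\mapsto p+1$, and reads off the period from the accumulated index shifts. Your proposal aims at something strictly stronger --- a proof uniform in $r$ --- but it is a plan rather than a proof: steps (i)--(iii), namely producing an ansatz for $F_k^{(p)}(x)$ polynomial in both $p$ and $r$ and proving it is propagated by $\tau$ with the claimed return data, are exactly the open problem, and you acknowledge as much when you say this is ``where the real difficulty lies.'' Nothing in the proposal supplies the closed form, the induction along the orbit, or the polynomial identities in $(p,r)$ that would be needed, so no part of the conjecture is actually established beyond what the paper already verifies computationally.

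There is also a concrete misidentification you should fix before attempting the uniform argument. You equate the shifted period of $H_n(F(x,r))$ with the return time $q$ of the $\tau$-orbit, i.e.\ the number of transformations until $F_q^{(p)}=F_0^{(p+1)}$. These are not the same number. Each application of $\tau$ in cases (ii) and (iii) of Proposition~\ref{xinu(0,i,ii)} lowers the Hankel index by $d+1$, where $d$ depends on the iterate, so the period of the determinant sequence is the \emph{sum} $\sum_i(d_i+1)$ over one cycle, not the cycle length. For $r=3$ the orbit closes after $2$ steps but the index drops by $3$ (hence period $3$); for $r=4$ it closes after $10$ steps but the index drops by $12$ (hence period $12=3r$). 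Any closing condition you write down in step (ii) must therefore track the individual degree drops $d_i$, not merely count iterations; as stated, your criterion would predict the wrong periods even in the two cases the paper works out in full.
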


Our main result is the following.
\begin{thm}
  The above Conjectures \ hold for $r \leq 27$.
\end{thm}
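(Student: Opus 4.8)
The plan is to treat each value of $r$ with $3 \le r \le 27$ as a separate finite computation carried out uniformly by the Sulanke--Xin continued fraction machinery, and to reduce the verification of the (infinitely many) Hankel determinant values predicted by the Conjectures to a single algebraic identity in one auxiliary parameter. I would start from the algebraic generating function $F(x,r) = M(x)^r$, where $M(x) = (1 - x - \sqrt{1-2x-3x^2})/(2x^2)$, and apply the quadratic transformation $\tau$ repeatedly. Because $M(x)$ is algebraic of degree $2$ over $\CC(x)$, every iterate stays of the form $(P_i(x) + Q_i(x)\sqrt{1-2x-3x^2})/R_i(x)$ with polynomial $P_i, Q_i, R_i$, so the whole orbit lives in a computable ring and can be generated exactly in Maple.

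Next I would search, as in Wang--Xin, for a shifted periodic pattern
$$F_0^{(p)}(x) \mathop{\longrightarrow}\limits^\tau F_1^{(p)}(x) \mathop{\longrightarrow}\limits^\tau \cdots \mathop{\longrightarrow}\limits^\tau F_q^{(p)}(x) = F_0^{(p+1)}(x),$$
with period $q = r$ when $r \equiv 0 \pmod 3$ and $q = 3r$ when $r \equiv 1, 2 \pmod 3$, exactly the periods asserted in the last Conjecture. Concretely, I would guess an explicit closed form for the initial series $F_0^{(p)}(x)$ in which the parameter $p$ enters the coefficients of $P_0, Q_0, R_0$ as low-degree polynomials (or rational functions) in $p$; the symbolic data from the first few periods $p = 0, 1, 2, \ldots$ fixes these coefficients by interpolation.

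The crux is then a verification, rather than a discovery, step: one must prove that the guessed family satisfies $\tau^q(F_0^{(p)}(x)) = F_0^{(p+1)}(x)$ identically in $p$. Since $\tau$ is an explicit quadratic (hence rational) operation on the $(P,Q,R)$ data, composing it $q$ times and clearing the radical turns this into a finite system of polynomial identities in $x$ and $p$; because both sides are rational in $p$ of a priori bounded degree, checking the identity at more integer values of $p$ than the degree bound proves it for all $p$, and hence for all $n$. Once the periodicity is established, the Sulanke--Xin connection relating the Hankel determinants of $G$ to those of $\tau(G)$ lets me read off $H_n(F(x,r))$ by accumulating the multiplicative connection factors over one completed period and over the $n$ periods traversed; the residue of $n$ modulo $q$ selects which partial product appears. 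Comparing the resulting expressions with the closed forms in the Conjectures—the $\alpha, \beta, \gamma$ signs and the $(n+1)^{r-1}$ and $((2r)(n+1))^{r-2}$ shapes—then completes each case.

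I expect the genuine obstacle to be the first two steps for the larger $r$, namely producing an ansatz for $F_0^{(p)}(x)$ whose $p$-dependence is simple enough that the periodicity identity closes exactly. The transformation $\tau$ can inflate the degrees of $P_i, Q_i, R_i$ across a period of length $3r$, and the polynomial-in-$p$ coefficients grow accordingly, so the interpolation and the final identity check become the computational bottleneck; controlling these degrees, and confirming that no spurious common factors are introduced or lost when the radical is cleared, is where the real work lies, whereas extracting the Hankel determinants and matching the Conjectured formulas afterward is routine bookkeeping.
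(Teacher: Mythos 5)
Your proposal follows essentially the same route as the paper: apply Sulanke--Xin's $\tau$ to the quadratic functional equation for $F(x,r)$, guess a shifted periodic family $F_1^{(p)}$ by computer experiment, verify the periodicity symbolically, and accumulate the multiplicative connection factors of Proposition~\ref{xinu(0,i,ii)} over one period together with the initial values to obtain the closed forms. The only slight imprecision is that the number $q$ of $\tau$-steps per period is not $r$ or $3r$ (it is $2$ for $r=3$ and $10$ for $r=4$); rather $r$ and $3r$ are the total decreases of the Hankel index accumulated over one period, since each application of $\tau$ lowers the index by $d+1$ — but this does not affect the validity of your argument.
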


The paper is organized as follows.
Section \ref{sec:sulanke-xin-trans} introduces Sulanke-Xin's continued fraction method, especially the quadratic transformation $\tau$.
Section \ref{calculations3-4} detailed description of the process for calculating $H_n(F(x,3))$ and $H_n(F(x,4))$ by using the Sulanke-Xin method. 
This is also the process in developing the software package.

\section{Main tool}

We will introduce the continued fraction method of Sulanke and Xin, especially their quadratic transformation $\tau$ in \cite{Sulanke-Xin}.
This is the main tool of this paper.

\subsection{Sulanke-Xin's quadratic transformation $\tau$\label{sec:sulanke-xin-trans}}
This subsection is copied from \cite{Y. Wang and G. Xin}. We include it here for the reader's convenience.

Suppose the generating function $F(x)$ is the unique solution of a quadratic functional equation which can be written as
\begin{gather}
  F(x)=\frac{x^d}{u(x)+x^kv(x)F(x)},\label{xinF(x)}
\end{gather}
where $u(x)$ and $v(x)$ are rational power series with nonzero constants, $d$ is a nonnegative integer, and $k$ is a positive integer.
We need the unique decomposition of $u(x)$ with respect to $d$: $u(x)=u_L(x)+x^{d+2}u_H(x)$ where $u_L(x)$ is a polynomial of degree at most $d+1$ and $u_H(x)$ is a power series.
Then Propositions 4.1 and 4.2 of \cite{Sulanke-Xin} can be summarized as follows.
\begin{prop}\label{xinu(0,i,ii)}
Let $F(x)$ be determined by  \eqref{xinF(x)}. Then the quadratic transformation $\tau(F)$ of $F$ defined as follows gives close connections
between   $H(F)$ and $H(\tau(F))$.
\begin{enumerate}
\item[i)] If $u(0)\neq1$, then $\tau(F)=G=u(0)F$ is determined by $G(x)=\frac{x^d}{u(0)^{-1}u(x)+x^ku(0)^{-2}v(x)G(x)}$, and $H_n(\tau(F))=u(0)^{n}H_n(F(x))$;

\item[ii)] If $u(0)=1$ and $k=1$, then $\tau(F)=x^{-1}(G(x)-G(0))$, where $G(x)$ is determined by
$$G(x)=\frac{-v(x)-xu_L(x)u_H(x)}{u_L(x)-x^{d+2}u_H(x)-x^{d+1}G(x)},$$
and we have
$$H_{n-d-1}(\tau(F))=(-1)^{\binom{d+1}{2}}H_n(F(x));$$

\item[iii)] If $u(0)=1$ and $k\geq2$, then $\tau(F)=G$, where $G(x)$ is determined by
$$G(x)=\frac{-x^{k-2}v(x)-u_L(x)u_H(x)}{u_L(x)-x^{d+2}u_H(x)-x^{d+2}G(x)},$$
and we have
$$H_{n-d-1}(\tau(F))=(-1)^{\binom{d+1}{2}}H_n(F(x)).$$\label{xinu(0)(ii)}

\end{enumerate}

\end{prop}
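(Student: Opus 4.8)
The plan is to treat the three cases as one normalization step plus one structural step, reducing the determinant claim in each case to a single factorization of the Hankel matrix whose only nontrivial contribution is a permutation sign. Throughout I would work with the quadratic form $u(x)F+x^{k}v(x)F^{2}=x^{d}$ of \eqref{xinF(x)}, which determines $F$ uniquely as a series beginning at order $x^{d}$ with leading coefficient $u(0)^{-1}$. I would first dispose of case i), the normalizing step: substituting $F=u(0)^{-1}G$ into the quadratic form and clearing $u(0)$ gives at once the stated equation for $G$, whose $u$-series $u(0)^{-1}u(x)$ now has constant term $1$. For the determinant, the Hankel matrix of $u(0)F$ is $u(0)\,(a_{i+j})_{0\le i,j\le n-1}$, so scaling each of its $n$ rows yields $H_n(u(0)F)=u(0)^{n}H_n(F)$. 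This proves i) and lets me assume $u(0)=1$ in cases ii) and iii).

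Next, for $u(0)=1$ I would derive the equation for $G$ purely algebraically. Inserting the decomposition $u=u_L+x^{d+2}u_H$ into the quadratic form and solving for the tail series so as to separate the polynomial part $u_L$ from $x^{d+2}u_H$, I would rearrange the resulting quadratic relation into the normal form $\tfrac{\text{numerator}}{\text{denominator}}$; this reproduces the displayed formulas for $G$. The only thing that changes with $k$ is the power of $x$ attached to the $v$-term, namely $x^{0}$ when $k=1$ (case ii) and $x^{k-2}$ when $k\ge 2$ (case iii). A check that $G$ again satisfies an equation of the form \eqref{xinF(x)}, now with the constant term of its $u$-series generically $\neq 1$, shows that $\tau$ may be iterated after a further use of i).

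The crux, and the step I expect to be the main obstacle, is the identity $H_{n-d-1}(\tau(F))=(-1)^{\binom{d+1}{2}}H_n(F)$. Here I would factor $(a_{i+j})_{0\le i,j\le n-1}$ using data read off from the functional equation: the relation $uF+x^{k}vF^{2}=x^{d}$ supplies lower- and upper-triangular Toeplitz-type matrices, built from the coefficients of $u$, that transform the Hankel matrix of $F$ into a bordered Hankel matrix of $\tau(F)=x^{-1}(G-G(0))$. Because $u(0)=1$ these triangular factors are unipotent and contribute no determinant, while the $v$-data is absorbed into the coefficients of $\tau(F)$ rather than into the determinant; this is exactly why no factor of $v(0)$ appears in the relation. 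The index drop by $d+1$ combines the order-$d$ vanishing of $F$ with the extra $x^{-1}$ in the definition of $\tau(F)$, and disposing of the resulting $d+1$ forced anti-diagonals requires reversing $d+1$ coordinates, a permutation of sign $(-1)^{\binom{d+1}{2}}$. The delicate bookkeeping---and the genuine obstacle---is to verify that after the triangular reduction and the reversal the surviving block is precisely the Hankel matrix $(b_{i+j})$ of $\tau(F)$ with the correct shift, and that subtracting $G(0)$ in the definition of $\tau(F)$ is exactly what makes this block close up. Once this is confirmed, ii) and iii) follow uniformly, the case split entering only through the form of $G$ found above and not through the sign or the shift.
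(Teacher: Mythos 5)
The paper itself contains no proof of this proposition: the subsection is copied verbatim from \cite{Y. Wang and G. Xin}, and the statement is presented as a summary of Propositions 4.1 and 4.2 of \cite{Sulanke-Xin}, so your attempt must be measured against the original argument there. Parts of your plan do check out. Case i) is exactly the trivial scaling $H_n(u(0)F)=u(0)^nH_n(F)$ together with substituting $F=u(0)^{-1}G$ into \eqref{xinF(x)}. Your algebraic derivation of the equations for $G$ also works as you describe: writing $W=u+x^kv F$, so that $F=x^d/W$, one peels off the polynomial part to get $W=u_L+x^{d+1}\tilde G$ with $\tilde G=xu_H+v\,(x^{-d}F)$ when $k=1$, and $W=u_L+x^{d+2}\tilde G$ with $\tilde G=u_H+x^{k-2}v\,(x^{-d}F)$ when $k\ge 2$; eliminating $F$ and setting $G=-\tilde G$ reproduces both displayed equations, and the reversal of $d+1$ coordinates indeed has sign $(-1)^{\binom{d+1}{2}}$.

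The genuine gap is your central factorization step, and it is not mere bookkeeping that you are deferring. First, multiplying the truncated Hankel matrix by a triangular Toeplitz matrix of a power series does \emph{not} give the Hankel matrix of the product series: the $(i,j)$ entry of the product is $\sum_{m=0}^{j}u_ma_{i+j-m}$, while $[x^{i+j}](uF)=\sum_{m=0}^{i+j}u_ma_{i+j-m}$, and the tail terms $j<m\le i+j$ do not vanish in general; making them vanish is exactly where the decomposition $u=u_L+x^{d+2}u_H$ with $\deg u_L\le d+1$ and the order-$d$ vanishing of $F$ must be used, and your sketch never invokes this at the crucial point. Second, matrices ``built from the coefficients of $u$'' cannot absorb the $v$-data: $uF=x^d-x^kvF^2$ still contains $F^2$, whose Hankel structure is not the bordered form you need; the transformation must encode the full unit denominator $W=u+x^kvF$ (equivalently $u_L$ plus the unknown $G$ itself), which is a materially different construction. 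Third, after splitting off the $(d+1)\times(d+1)$ anti-triangular corner, the surviving factor is a Schur complement $D-CA^{-1}B$, not the raw lower-right block, so ``the block closes up to the Hankel matrix of $\tau(F)$'' is precisely the statement to be proved, not a consequence of the setup. Finally, your heuristic for the index drop (order-$d$ vanishing plus ``the extra $x^{-1}$'') cannot be right as stated, since in case iii) one has $\tau(F)=G$ with no $x^{-1}$ yet the same drop $d+1$; the true dichotomy is that the remainder sits at order $2d+1$ when $k=1$ but $2d+2$ when $k\ge2$, which is what forces $\tau(F)=x^{-1}(G-G(0))$ in ii) versus $\tau(F)=G$ in iii). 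Since you explicitly flag these verifications as the ``genuine obstacle'' and leave them open, what you have is a plausible strategy in the Gessel--Xin/Sulanke--Xin spirit, not a proof; the content of Propositions 4.1--4.2 of \cite{Sulanke-Xin} is exactly the careful execution of this truncated determinant calculus.
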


\section{The calculations of $H_n(F(x,3))$ and $H_n(F(x,4))$}\label{sec-HG}\label{calculations3-4}

\subsection{}
For $r=3$, we have the functional equation
\begin{align*}
F(x,3)={\frac {1}{ 1-3x+2x^3-x^6F(x,3) }}.
\end{align*}
\begin{proof}[Proof of Theorem \ref{example-F3}]
We apply Proposition \ref{xinu(0,i,ii)} to $F_0:=F(x,3)$ by repeatedly using the transformation $\tau$.
This results in a shifted periodic continued fractions of order 2:
\begin{align*}
  F_0(x)& \mathop{\longrightarrow}\limits^\tau F_1^{(1)}(x)\mathop{\longrightarrow}\limits^\tau F^{(1)}_{2}(x)\mathop{\longrightarrow}\limits^\tau F^{(1)}_{3}(x)= F_1^{(2)}(x)\cdots.
\end{align*}
We obtain
\begin{align}
  H_k(F_0)=H_{k-1}(F_1^{(1)}).\label{e-3G0}
\end{align}
For $p\geq1$, computer experiments suggest us to define
\begin{align*}
  F^{(p)}_{1} =&-{\frac {{x} \left( x^3+3p(p+1)x-p(p+1) \right)}{ p^2 x^2 F^{(p)}_{1} +2p
\,{x}^{3}+ 3p^2 x-p^2 }}.
\end{align*}
Then the results can be summarized as follows:
\begin{align*}
  H_{k-1}\left(F_{1}^{p}\right)=-\left(\frac{p+1}{p}\right)^{k-1} H_{k-3}\left(F_{2}^{p}\right) \\
H_{k-3}\left(F_{2}^{p}\right)=\left(\frac{p}{p+1}\right)^{k-3} H_{k-4}\left(F_{1}^{p+1}\right) .
\end{align*}
Combination of  the above formulas gives the recursion
\[H_{k-1}\left(F_{1}^{p}\right)=-\left(\frac{p+1}{p}\right)^{2} H_{k-4}\left(F_{1}^{p+1}\right).\]
Let $k-1=3n+j$, where $0\leq j<3$. We then deduce that
\begin{gather}
H_{3 n+j}\left(F_{1}^{1}\right)=(-1)^{n}(n+1)^{2} H_{j}\left(F_{1}^{n+1}\right).\label{e-3G3n}
\end{gather}
The initial values are
\begin{align*}
 H_{0}\left(F_{1}^{n+1}\right)=1 ; \quad H_{1}\left(F_{1}^{n+1}\right)=0 ; \quad H_{2}\left(F_{1}^{n+1}\right)=\left(\frac{n+2}{n+1}\right)^{2}.
\end{align*}
Then the theorem follows by the above initial values, \eqref{e-3G0} and \eqref{e-3G3n}.
\end{proof}


\subsection{}

\medskip
For $r=4$, we have the functional equation
\begin{align*}
  F(x,4)= -\frac1{F(x, 4) x^{8}+x^{4}-4 x^{3}-2 x^{2}+4 x-1} .
\end{align*}

\begin{proof}[Proof of Theorem \ref{example-F4}]
We apply Proposition \ref{xinu(0,i,ii)} to $F_0:=F(x,4)$ by repeatedly using the transformation $\tau$.
This results in a shifted periodic continued fractions of order 10:
\begin{align*}
  F_0(x)& \mathop{\longrightarrow}\limits^\tau F_1^{(1)}(x)\mathop{\longrightarrow}\limits^\tau F^{(1)}_{2}(x)\mathop{\longrightarrow}\limits^\tau F^{(1)}_{3}(x)\mathop{\longrightarrow}\limits^\tau F^{(1)}_{4}(x)\mathop{\longrightarrow}\limits^\tau F^{(1)}_{5}(x)\mathop{\longrightarrow}\limits^\tau F^{(1)}_{6}(x)\mathop{\longrightarrow}\limits^\tau F^{(1)}_{7}(x)\\
  &\mathop{\longrightarrow}\limits^\tau F^{(1)}_{8}(x)\mathop{\longrightarrow}\limits^\tau F^{(1)}_{9}(x)\mathop{\longrightarrow}\limits^\tau F^{(1)}_{10}(x)
  \mathop{\longrightarrow}\limits^\tau F^{(1)}_{11}(x)= F_1^{(2)}(x)\cdots.
\end{align*}
We obtain
\begin{align}
  H_k(G_0)=H_{k-1}(G_1^{(1)}).\label{e-4G0}
\end{align}
For $p\geq1$, computer experiments suggest us to define
\begin{align*}
F_{1}^{(p)}&=-\frac{1}{x^{2} F_{1}^{(p)}-x^{4}+(16 p-12) x^{3}+\left(-64 p^{2}+104 p-38\right) x^{2}+4 x-1} \times \\
&x^{6}+(-8 p+8) x^{5}+\left(96 p^{2}-148 p+52\right) x^{4}+\left(-512 p^{3}+1216 p^{2}-928 p+220\right) x^{3}+ \\
&\left(1024 p^{4}-3328 p^{3}+3920 p^{2}-1944 p+345\right) x^{2}+\left(-128 p^{2}+200 p-68\right) x+32 p^{2}-52 p+18.
\end{align*}
Then the results can be summarized as follows:
\begin{align*}
 H_{k-1}\left(F_{1}^{p}\right)&=\left(32 p^{2}-52 p+18\right)^{k-1} H_{k-2}\left(F_{2}^{p}\right) \\
H_{k-2}\left(F_{2}^{p}\right)&=\left(-\frac{8 p+1}{256 p^{3}-704 p^{2}+612 p-162}\right)^{k-2} H_{k-3}\left(F_{3}^{p}\right) \\
H_{k-3}\left(F_{3}^{p}\right)&=\left(-1 / 2 \frac{8 p-9}{(2 p-1)(8 p+1)^{2}}\right)^{k-3} H_{k-4}\left(F_{4}^{p}\right) \\
H_{k-4}\left(F_{4}^{p}\right)&=(2(8 p+1)(2 p-1))^{k-4} H_{k-5}\left(F_{5}^{p}\right) \\
H_{k-5}\left(F_{5}^{p}\right)&=(-4 p(8 p-3))^{k-5} H_{k-6}\left(F_{6}^{p}\right)\\
 H_{k-6}\left(F_{6}^{p}\right)&=\left(-4(8 p-3)^{-2}\right)^{k-6} H_{k-1}\left(F_{1}^{p}\right) H_{k-7}\left(F_{7}^{p}\right) \\
H_{k-7}\left(F_{7}^{p}\right)&=-\left(1 / 16 \frac{8 p-3}{p}\right)^{k-7} H_{k-10}\left(F_{8}^{p}\right) \\
H_{k-10}\left(F_{8}^{p}\right)&=\left(1 / 16 \frac{8 p+3}{p}\right)^{k-10} H_{k-11}\left(F_{9}^{p}\right) \\
 H_{k-12}\left(F_{9}^{p}\right)&=\left(-4(8 p+3)^{-2}\right)^{k-11} H_{k-12}\left(F_{10}^{p}\right)
\end{align*}
Combination of  the above formulas gives the recursion
\[H_{k-1}(F_1^{(p)})=H_{k-13}(F^{(p+1)}_{1}).\]
Let $k-1=12n+j$, where $0\leq j<11$. We then deduce that
\begin{gather}
 H_{12n+j+1}(F)= H_{12n+j}(F_1^{(1)})=H_{j}(F^{(n+1)}_{1}).\label{e-4G4n}
\end{gather}
The initial values are
\begin{align*}
H_{0}\left(F_{1}^{n+1}\right)&=1, \qquad \qquad\qquad\qquad\qquad\quad H_{1}\left(F_{1}^{n+1}\right)=32(n+1)^{2}-52 n-34,\\
H_{2}\left(F_{1}^{n+1}\right)&=-2\left(8 n+9\right)(2 n+1),\qquad \  \ \  \ H_{3}\left(F_{1}^{n+1}\right)=H_{4}\left(F_{1}^{n+1}\right)=-1,\\
H_{5}\left(F_{1}^{n+1}\right)&=4\left(n+1\right)\left(8n+5\right),\qquad \qquad H_{6}\left(F_{1}^{n+1}\right)=64\left(n+1\right)^2,\\
H_{7}\left(F_{1}^{n+1}\right)&=H_{8}\left(F_{1}^{n+1}\right)=0,\qquad \qquad \ \  \ 
H_{9}\left(F_{1}^{n+1}\right)=-64\left(n+1\right)^2,\\
H_{10}\left(F_{1}^{n+1}\right)&=-4\left(n+1\right)\left(8n+11\right),\qquad  \ 
H_{11}\left(F_{1}^{n+1}\right)=H_{12}\left(F_{1}^{n+1}\right)=1.
\end{align*}
Then the theorem follows by the above initial values, \eqref{e-4G0} and \eqref{e-4G4n}.
\end{proof}

\medskip

\noindent
\textbf{Declaration of Interest Statement}
\ \ The authors declare that they have no known competing financial interests or personal relationships that could have appeared to influence the work reported in this paper.\\

\medskip

\noindent
\textbf{Data Availability}\ \   Data availability is not applicable to this article as no new data were created or analyzed in this study.

\medskip

\noindent
\textbf{Acknowledgments}
\ \ The authors would like to thank zihao Zhang for his careful reading and very useful comments.\\


\end{document}